\numberwithin{equation}{section}
\newtheorem{thm}{Theorem}[section]
\newtheorem{rmk}{Remark}[section]
\newtheorem{cor}{Corollary}[section]
\newtheorem{lem}{Lemma}[section]
 \def\p{\partial}
\def \Vh0{\stackrel{\circ}{V}_h}
\def\l{\label}  \def\f{\frac}  
\def\K{\texttt{K}}
\def\m{\mbox}   \def\lam{\lambda}
\def\l|{\left|}
\def\r|{\right|}
\newcommand{\R}{\mathbb{R}}
\newcommand{\lc}
{\mathrel{\raise2pt\hbox{${\mathop<\limits_{\raise1pt\hbox
{\mbox{$\sim$}}}}$}}}
\newcommand{\gc}
{\mathrel{\raise2pt\hbox{${\mathop>\limits_{\raise1pt\hbox{\mbox{$\sim$}}}}$}}}
\newcommand{\ec}
{\mathrel{\raise2pt\hbox{${\mathop=\limits_{\raise1pt\hbox{\mbox{$\sim$}}}}$}}}
\def\be{\begin{equation}} \def\ee{\end{equation}}
\def\bea{\begin{eqnarray}}  \def\eea{\end{eqnarray}}
\def\beas{\begin{eqnarray*}} \def\eeas{\end{eqnarray*}}
\def\bn{\begin{enumerate}} \def\en{\end{enumerate}}
\def\bd{\begin{description}} \def\ed{\end{description}}
\title{Super-resolution in high contrast media\thanks{\footnotesize This work was supported  by the
ERC Advanced Grant Project MULTIMOD--267184.}}
\date{}
\author{
Habib Ammari\thanks{\footnotesize Department of Mathematics and Applications,
Ecole Normale Sup\'erieure, 45 Rue d'Ulm, 75005 Paris, France
(habib.ammari@ens.fr, zh.hai84@gmail.com).} \and   Hai Zhang\footnotemark[2]
}
\begin{document}
\maketitle

\begin{abstract}
A mathematical theory is developed to explain the super-resolution and super-focusing  in high contrast media.   The approach is based on the resonance expansion of the Green function associated with the medium. 
It is shown that the super-resolution is due to sub-wavelength resonant modes excited in the medium which can propagate into the far-field.  
\end{abstract}

\medskip

\bigskip

\noindent {\footnotesize Mathematics Subject Classification
(MSC2000): 35R30, 35B30.}

\noindent {\footnotesize Keywords: super-resolution, diffraction limit,
resonant medium, high contrast}


\section{Introduction} \label{sec-intro}
It is well-known that the resolution in the homogeneous space for far-field imaging system is limited by half the operating wave-length, which is a direct consequence of Abbe's diffraction limit. In order to differentiate point sources which are located less than half the wavelength apart, super-resolution techniques have to be used. While many techniques exist in practice, here we are only interested in the one using resonant media. The resolution enhancement in resonant media has been demonstrated in various recent experiments \cite{physics1, physics2,physics3, physics4, physics5}. The basic idea is the following: suppose that we have sources that are densely located in a homogeneous space of size the wavelength of the wave the sources can emit, and we want to differentiate them by making measurements in the far-field. While this is impossible in the homogeneous space, it is possible if the medium around these sources is changed so that the point spread function \cite[p.35]{book1}, which is the imaginary part of the Green function in the new medium, displays a much sharper peak than the homogeneous one and thus can resolve sub-wavelength details. The key issue in such an approach is to design the surrounding medium so that the corresponding Green function has the tailored property. 

In this paper, we develop the mathematical theory for realizing this approach by using high contrast media. We show that in high contrast media the super-resolution is due to the propagating sub-wavelength
resonant modes excited in the media and is limited by the finest structure in these modes. 
It is worth emphasizing that this mechanism  is similar to the one using Helmholtz resonators, which was recently investigated in \cite{hai, physics2}.

The paper is organized as follows. In section \ref{sec2} we recast the imaging problem as an inverse source problem and  outline different approaches for solving the inverse source problem. We emphasize that time-reversal is a direct imaging method while $L^2$- and $L^1$-minimization methods are post-imaging processes by using a prior information. In section \ref{sec3} we derive expansions of the Green function in a high contrast medium and provide a mathematical foundation for the super-resolution, which is the counterpart of super-focusing. The paper ends with a short discussion. 


\section{Inverse source problems} \label{sec2}

We consider the following inverse source problem in a general medium characterized by refractive index $n(x)$,
\beas
&\Delta u + k^2 n(x) u = f ,\\
& u \,\m{ satisfies the Sommerfeld radiation condition}.
\eeas

We assume that $n-1$ is compactly supported in a bounded domain $D\subset \R^d$ and is assumed to be known.
We are interested in imaging $f$, which can be either a function in $L^2(D)$ or consists of finite number of point sources  supported in $D$, from the scattered field  $u$ in the 
far-field.
Denote by $G(x, y, k)$ the corresponding Green function for the media, that is, the solution to
\beas
&\Delta G(x,y,k) + k^2 n(x) G(x,y,k) = \delta(x-y), \\
& u \,\m{ satisfies the Sommerfeld radiation condition}
\eeas
with $\delta$ being the Dirac mass, we have
$$
u(x) = \K_D[f](x):=\int_D G(x, y, k)f(y)\, dy.
$$

The inverse source problem of reconstructing $f$ from $u$ for fixed frequency is well-known to be ill-posed for general sources; see, for instance, \cite{book1,book2, gang}. 
While there are many methods of reconstructing $f$ from $u$, we are interested in the following three most common ones in the literature:
\begin{enumerate}
\item
Time reversal based method;
\item
Minimum $L^2$-norm solution;
\item
Minimum $L^1$-norm solution.
\end{enumerate}

\subsection{Time reversal based method}
We first present some basics about the
time reversal based method. The imaging functional is given as follows
\be
I(x) = \int_{\Gamma} \overline{G(x, z, k)} u(z)\, ds(z) = \K^*_D \K_D[f](x),
\ee
where $\Gamma$ is a closed surface in the far-field where the measurements are taken, and $\K^*_D$ is the adjoint of $\K_D$ viewed as a linear operator from the space $L^2(D)$ to $L^2(\Gamma)$.
The physical meaning of the operator $\K^*_D$ is to time-reversing (or focusing) the observed field.
This imaging method is the simplest and perhaps the mostly used one in practice.

The resolution of this imaging method can be derived from the following
Helmholtz-Kirchhoff identity:
\be
\int_{\Gamma} \Big( \overline{G(x, z, k)} \f{\p G(y, z, k)}{\p \nu}
- \f{\p \overline{G(x, z, k)}}{\p \nu} G(y, z, k)   \Big)  \, ds(z)= - 2 i \Im{G}(x,y, k), \quad \forall x, y \in D.
\ee

Note that in the far-field, we can use the Sommerfeld radiation condition, as a result, we obtain the following lemma. 

\begin{lem} Let $G$ be the Green function and let $\Gamma$ be a smooth closed surface. We have
\be
k \int_{\Gamma} \overline{G(x, z, k)} G(y, z, k) \, ds(z) = - \Im{G}(x,y, k) + O(\f{1}{R}),
\ee
where $R$ is the distance between the far-field surface $\Gamma$, where the measurements are taken, and $D$, where the sources are located.
\end{lem}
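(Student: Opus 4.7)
The strategy is to convert the normal-derivative terms in the Helmholtz-Kirchhoff identity displayed just above the lemma into multiplicative $ik$ factors by invoking the Sommerfeld radiation condition on $\Gamma$, and then to control the resulting remainder via the far-field decay of $G$.

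First I would use the fact that since $n-1$ is compactly supported in $D$ and $x,y\in D$, the Green functions $G(x,\cdot,k)$ and $G(y,\cdot,k)$ satisfy the homogeneous Helmholtz equation in the exterior of $D$ together with the Sommerfeld radiation condition. The standard far-field expansion, obtainable from the Lippmann-Schwinger representation $G(\cdot,\cdot,k) = G_0 + k^2 G_0 \K_D[(n-1)G]$, then yields on $\Gamma$ (at distance $R$ from $D$) the pointwise estimates
\be
|G(x,z,k)| = O\!\left(R^{-(d-1)/2}\right),\qquad \f{\p G(x,z,k)}{\p \nu(z)} = ik\,G(x,z,k) + O\!\left(R^{-(d+1)/2}\right),\nonumber
\ee
uniformly for $x\in D$, with the analogous expansions for $G(y,\cdot,k)$ and, after conjugation, $\partial_\nu \overline{G(x,z,k)} = -ik\,\overline{G(x,z,k)} + O(R^{-(d+1)/2})$.

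Next I would substitute these into the Helmholtz-Kirchhoff identity. The two dominant contributions from the cross terms add rather than cancel, producing
\be
\int_{\Gamma}\!\Big(\overline{G(x,z,k)}\,\p_\nu G(y,z,k) - \p_\nu\overline{G(x,z,k)}\,G(y,z,k)\Big)\,ds(z) = 2ik\int_{\Gamma}\overline{G(x,z,k)}\,G(y,z,k)\,ds(z) + E,\nonumber
\ee
where $E$ collects the Sommerfeld remainders. Setting this equal to $-2i\,\Im G(x,y,k)$ and dividing by $2i$ gives the claimed identity provided $E = O(R^{-1})$.

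The final task is to verify the error bound. Each remainder term in $E$ is pointwise of size $O(R^{-(d-1)/2})\cdot O(R^{-(d+1)/2}) = O(R^{-d})$, while $|\Gamma| = O(R^{d-1})$, so $E = O(R^{-1})$ after integration. The main obstacle, and the only delicate point, is ensuring that the Sommerfeld remainder is genuinely $O(R^{-(d+1)/2})$ rather than the merely $o(R^{-(d-1)/2})$ provided by the radiation condition in its weakest form; this sharper decay follows from differentiating the explicit far-field asymptotics of $G$, which is available precisely because $n-1$ is compactly supported, so that $G(x,z,k)$ differs from the free Green function $G_0(x,z,k)$ by a term of the same far-field order but with one extra factor of $R^{-1}$ in its derivative correction. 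Everything else is routine bookkeeping.
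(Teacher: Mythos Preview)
Your proposal is correct and follows exactly the route the paper indicates: substitute the Sommerfeld radiation condition into the Helmholtz--Kirchhoff identity to replace the normal derivatives by $ik$ times the Green function, and absorb the discrepancy into an $O(1/R)$ remainder. The paper offers no more than the one-line remark ``in the far-field, we can use the Sommerfeld radiation condition,'' so you have simply filled in the details of that same argument, including the careful point that compact support of $n-1$ upgrades the qualitative $o$-form of the radiation condition to the quantitative $O(R^{-(d+1)/2})$ needed for the error bound.
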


As a corollary, the following result holds.
\begin{cor} We have
$$
I(x) = \K^*_D \K_D[f](x) \approx  - \f{1}{k} \int_D \Im{G}(x,y, k) f(y)\, dy.
$$
\end{cor}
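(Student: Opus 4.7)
The plan is to unwind the definition of $I(x)$ in terms of $f$ using Fubini's theorem, and then apply the lemma pointwise in the inner integral.

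First I would write
\begin{eqnarray*}
I(x) &=& \int_\Gamma \overline{G(x,z,k)}\, u(z)\, ds(z) \\
     &=& \int_\Gamma \overline{G(x,z,k)} \left( \int_D G(z,y,k)\, f(y)\, dy \right) ds(z),
\end{eqnarray*}
and then exchange the order of integration (the integrand is absolutely integrable since $\Gamma$ is a smooth closed surface at finite distance, $D$ is bounded, and the Green function is smooth away from the diagonal, which is irrelevant here since $z\in\Gamma$ is separated from $y\in D$). This yields
$$
I(x) = \int_D f(y) \left( \int_\Gamma \overline{G(x,z,k)}\, G(z,y,k)\, ds(z) \right) dy.
$$

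Next I would use reciprocity for the Green function in the medium, $G(z,y,k) = G(y,z,k)$, so that the inner integral becomes exactly the quantity estimated by the lemma. Substituting gives
$$
\int_\Gamma \overline{G(x,z,k)}\, G(y,z,k)\, ds(z) = -\frac{1}{k}\, \Im G(x,y,k) + O\!\left(\frac{1}{kR}\right),
$$
and therefore
$$
I(x) = -\frac{1}{k} \int_D \Im G(x,y,k)\, f(y)\, dy + O\!\left(\frac{1}{kR}\right)\|f\|_{L^1(D)}.
$$

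The only mild subtlety is justifying reciprocity and Fubini in the media with refractive index $n(x)$; since $n-1$ is compactly supported and the Sommerfeld radiation condition is imposed, reciprocity follows from standard Green's identity arguments, and integrability is automatic. The approximation sign in the statement is to be understood in the far-field regime, i.e.\ as $R\to\infty$, where the $O(1/R)$ remainder from the lemma becomes negligible. No deeper obstacle appears beyond careful bookkeeping of this remainder.
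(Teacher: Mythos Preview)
Your proof is correct and is precisely the natural derivation the paper has in mind; the paper itself gives no explicit argument beyond the phrase ``As a corollary, the following result holds,'' and your steps (substitute $u=\K_D[f]$, apply Fubini, invoke reciprocity $G(z,y,k)=G(y,z,k)$, then use the lemma on the inner integral) are exactly what that phrase is summarizing. Your handling of the $O(1/R)$ remainder and the justification of Fubini/reciprocity are also appropriate and slightly more careful than the paper.
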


If we take $f$ to be a point source, we obtain the point spread function of the imaging functional, which shows that the time-reversal based method has resolution limited by $\Im{G}(x,y, k)$.

\subsection{Minimum $L^2$-norm solution}
We now consider the second method which is based on $L^2$-minimization.
We assume that the source $f\in L^2(D)$.
The method is given as follows
\begin{equation} \label{l2m}
\min {\|g\|_{L^2(D)}} \,\mbox{ subject to } \,\,\,\, K_D[g] =u,\end{equation}
which can be relaxed in the presence of noise as follows
\begin{equation} \label{l2mr}
\min {\|g\|_{L^2(D)}} \,\mbox{ subject to } \,\,\,\, \|\K_D[g] -u\|^2_{L^2(\Gamma)} < 
\delta
\end{equation}
with $\delta >0$ being a known small parameter.

In order to obtain an explicit formula for this method, we consider the singular value decomposition for the operator
$$
\K_D : L^2(D) \rightarrow  L^2(\Gamma).
$$
We have
$$
\K_D = \sum_{l\geq 0} \sigma_{l} P_l,
$$
where $\sigma_l$ is the l-th singular value and $P_{l}$ is the associated projection.
The ill-posedness of the inverse source problem is due to the fast decay of the singular values to zero; see, for instance, \cite{book2, slepian}. 

By a direct calculation, one can show that the minimum $L^2$-norm solution to (\ref{l2m}) is given by
\be
I(x) = \sum_{l\geq 0} \f{P_l^*P_l}{\sigma_{l}^2} \K_D^* \K_D[f](x),
\ee
while the regularized one, which is the solution to (\ref{l2mr}) is given by
\be
I_\alpha(x) = \sum_{l\geq 0} \f{P_l^*P_l}{\sigma_{l}^2 + \alpha} \K_D^* \K_D[f](x),
\ee
with $\alpha$ as a function of $\delta$ being chosen by Morozov's discrepancy principle; see, for instance, \cite{otmar}.

\subsection{Minimum $L^1$-norm solution}
The method of minimum $L^1$-norm solution is proposed by Cand\`es and Fernandez-Granda in the recent papers \cite{candes1, candes2}.
The authors assume that $f$ is equal to suppositions of separate point sources.
Their method is to solve the minimization problem
$$
\min {\|g\|_{L^1(D)}} \, \,  \mbox{ subject to }\,\,   \K_D^*\K_D[g] = \K_D^*[u],
$$
or its relaxed version, which reads as
$$
\min {\|g\|_{L^1(D)}} \, \, \mbox{ subject to
 }\,\, \|\K_D^*\K_D[g] -\K_D^*[u] \|^2_{L^2(\Gamma)}  < \delta.
$$
They show that under a minimum separation condition for the point sources, the inverse source problem is well-posed.
A main feature of their approach is that the $L^1$-minimization can pull out small spikes even though they may be completely buried in the side lobes of the large ones. 

\subsection{The special case of homogeneous medium}

In homogeneous medium, we have $n \equiv 1$. For simplicity, we consider the case $d=3$.
$$
G(x, y, k) = G_0(x, y, k) = - \f{e^{ik|x-y|}}{4 \pi |x-y|}.
$$

In the far-field, where $k|y|= O(1)$ and $k|x| >> 1 $, we have $|x-y| \approx  |x| - \hat{x}\cdot y$, where $\hat{x} = \f{x}{|x|}$.
Thus,
$$
u(x) = - \int_{D}  \f{e^{ik|x-y|}}{4 \pi |x-y|} f(y)  \, dy \approx -  \f{e^{ik|x|}}{4 \pi |x|} \hat{f}(k\hat{x}),
$$
where $\hat{f}$ is the Fourier transform of $f$. 

If we make measurements on the surface $\Gamma = S(0, R)$, the sphere of radius $R$ and center the origin,  then we have
$$
u(x)= - \f{e^{ikR}}{4 \pi R} \hat{f}(k\hat{x}).
$$

Using the time-reversal method, we have
$$
I(z) = - \f{1}{16k \pi^2 R^2} \int_{|y|= R}  e^{ik \hat{y}\cdot(x-z)}  \hat{f}(k\hat{y}) \, ds(y)= -
\f{1}{4\pi } \int_{D}  f(y) \frac{\sin k |z-y|}{k |z-y|} \, dy.
$$

\section{The Green function in high contrast media} \label{sec3}

Throughout this section, we put the frequency $k$ to be the unit and suppress its 
presence in what follows. We consider the following Helmholtz equation with a delta source
term 
\begin{align}
 \Delta_x G(x,x_0) + G(x, x_0) + \tau n(x)\chi_{D}G(x,x_0) = \delta(x-x_0) \quad  \mbox{in} \,\, \R^d,
\end{align}
where $\chi_{D}$ is the characteristic function of a bounded domain $D\subset \R^d$, $n(x)$ is a positive function of order one in the space of  $C^1(\bar{D})$ and $\tau \gg 1$ is the contrast. We denote by
$G_0(x, x_0)$ the free space Green function.

Write $G= v+G_0$, we can show that
\be
\Delta v + v = -\tau n(x) \chi_D (v+G_0).
\ee
Thus,
$$
v(x, x_0)= -\tau\int_{D} n(y) G_0(x, y) \bigg(v(y, x_0)+ G_0(y, x_0)\bigg)\, dy .
$$
Define
\be
\K_D[f](x) = -\int_{D} n(x) G_0(x, y)f(y)\, dy.
\ee
Then, $v=v(x)= v(x, x_0)$ satisfies the following integral equation
\be
(I -\tau\K_D) [v] = \tau\K_D [G(\cdot, x_0)],
\ee
and hence, 
$$
v(x) = (\f{1}{\tau} - \K_{D})^{-1} \K_D [G(\cdot, x_0)].
$$

In what follows, we present properties of the integral operator $\K_D$.

\begin{lem}
The operator $\K_D$ is compact from $L^2(D)$ to $L^2(D)$. In fact, $\K_D$ is bounded from $L^2(D)$ to $H^2(D)$. Moreover,
$\K_D$ is a Hilbert-Schmidt operator.
\end{lem}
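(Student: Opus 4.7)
The plan is to handle the three claims in the order dictated by the word ``In fact'' in the statement: first prove the $L^2(D)\to H^2(D)$ boundedness by elliptic regularity for the Helmholtz operator, deduce compactness on $L^2(D)$ via Rellich--Kondrachov, and then verify the Hilbert--Schmidt property separately by a direct kernel computation.

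\textbf{$L^2\to H^2$ boundedness.} Factor the operator as $\K_D[f](x) = -n(x)\, V_D[f](x)$, where $V_D[f](x):=\int_D G_0(x,y) f(y)\,dy$ is the Helmholtz volume potential. Since $G_0$ is the outgoing fundamental solution of $\Delta+1$, the function $u=V_D[f]$ solves $(\Delta+1)u = f\chi_D$ in $\R^d$ with the Sommerfeld radiation condition. For $f\in L^2(D)$, standard interior elliptic regularity for the Helmholtz operator gives $\|u\|_{H^2(D')}\lesssim \|f\|_{L^2(D)}$ on any bounded open $D'\supset\bar D$, and hence on $D$ itself. Multiplication by the bounded coefficient $n$ (smooth enough on $\bar D$) preserves membership in $H^2(D)$ with a controlled norm, yielding the desired boundedness of $\K_D:L^2(D)\to H^2(D)$.

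\textbf{Compactness on $L^2$.} For a bounded Lipschitz domain $D$, the Rellich--Kondrachov embedding $H^2(D)\hookrightarrow L^2(D)$ is compact, so composing this with the bounded map from the previous step shows that $\K_D:L^2(D)\to L^2(D)$ is compact.

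\textbf{Hilbert--Schmidt.} The kernel of $\K_D$ is $K(x,y)=-n(x)G_0(x,y)$, and the Hilbert--Schmidt norm squared equals $\iint_{D\times D}|K(x,y)|^2\,dx\,dy$. Because $n$ is bounded on $\bar D$ and $G_0$ has an explicit form (Hankel function $H_0^{(1)}$ in $d=2$, and $-e^{i|x-y|}/(4\pi|x-y|)$ in $d=3$), the only possible obstruction is the diagonal singularity, which is logarithmic in $d=2$ and of order $|x-y|^{-2}$ in $d=3$. Both are locally integrable on the bounded set $D\times D$, so the double integral is finite and $\K_D$ is Hilbert--Schmidt (this alone gives a second, self-contained proof of compactness on $L^2$).

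\textbf{Main obstacle.} The substantive step is the first one: invoking elliptic regularity for the Helmholtz operator in a form that delivers $H^2(D)$ estimates for the volume potential of an $L^2(D)$ density, and then arguing that multiplication by $n$ does not leave $H^2(D)$. The stated hypothesis $n\in C^1(\bar D)$ is borderline for the latter (strictly speaking one wants $n$ at least in $W^{2,\infty}$, or else reinterprets $H^2$ as the natural regularity of $V_D[f]$ before multiplication), and this is where a rigorous write-up would need to be most careful. The Hilbert--Schmidt step is, by contrast, a routine kernel estimate confined to dimensions $d\le 3$.
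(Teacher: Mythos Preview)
The paper does not supply a proof of this lemma; it is stated and then the text moves directly to the next lemma. Your argument is the standard one and is correct: elliptic regularity for the Helmholtz volume potential gives the $L^2\to H^2$ bound, Rellich--Kondrachov then yields compactness, and square-integrability of the kernel on $D\times D$ (using the explicit singularity of $G_0$ in dimensions $d\le 3$) gives the Hilbert--Schmidt property.

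One remark on the ``main obstacle'' you flag. The definition of $\K_D$ as written in the paper has $n(x)$ outside the integral, but if you look at the derivation just above it (where $n(y)$ appears in the integral for $v$) and at the proofs of the two subsequent lemmas (which both write $\K_D[u]=\int_D G_0(x,y)n(y)u(y)\,dy$), the intended operator has the weight $n(y)$ inside. With that reading, $(\Delta+1)\K_D[f]=-n f$ on $D$, and interior elliptic regularity gives $\K_D[f]\in H^2$ directly with no post-multiplication by $n$; your concern about whether $C^1(\bar D)$ suffices for $nu\in H^2$ then evaporates. So the borderline regularity issue you carefully isolate is really an artifact of a typo in the displayed definition rather than a genuine gap in the argument.
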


\begin{lem} \label{lem-11} Let $\sigma(\K_D)$ be the spectrum of $\K_D$. We have
\begin{enumerate}
\item[(i)]
$\sigma(\K_D) = \{0, \lambda_1, \lambda_2, ..., \lambda_n, ....\}$, where $|\lambda_1| \geq |\lambda_2| \geq |\lambda_3|\geq...  $ and $\lambda_n \rightarrow 0$;

\item[(ii)]
$\{0\} = \sigma(\K_D) \backslash  \sigma_p (\K_D)$ with $\sigma_p (\K_D)$ being the point spectrum of $\K_D$.

\end{enumerate}
\end{lem}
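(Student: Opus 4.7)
The plan is to combine the Riesz--Schauder theory of compact operators with a direct injectivity argument for $\K_D$ based on uniqueness for the exterior Helmholtz problem. Since compactness of $\K_D$ has just been established in the previous lemma, the structural content of (i) is immediate, and the only substantive point in (ii) is that $0$ is not an eigenvalue.

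For part (i), Riesz--Schauder applied to the compact operator $\K_D:L^2(D)\to L^2(D)$ yields at once that $\sigma(\K_D)\setminus\{0\}$ consists of at most countably many eigenvalues of finite algebraic multiplicity, with $0$ as the only possible accumulation point. Since $L^2(D)$ is infinite-dimensional, a compact operator on it cannot be boundedly invertible, so $0\in\sigma(\K_D)$. Ordering the nonzero eigenvalues by nonincreasing modulus with multiplicities produces the sequence $\{\lambda_n\}$ with $|\lambda_n|\to 0$.

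For part (ii), Riesz--Schauder also gives that every nonzero point of $\sigma(\K_D)$ is an eigenvalue, so the only remaining statement is $0\notin\sigma_p(\K_D)$, i.e.\ that $\K_D$ is injective. Suppose $f\in L^2(D)$ satisfies $\K_D f=0$. Since $n>0$ on $\bar D$, this is equivalent to
\[
 u(x) := \int_D G_0(x,y)\,f(y)\,dy = 0 \quad \text{for a.e.\ } x\in D.
\]
Viewed on $\R^d$, $u$ is the outgoing (Sommerfeld) solution of $(\Delta+1)u=f$ with $f$ extended by $0$ outside $D$, and by global elliptic regularity $u\in H^2_{\mathrm{loc}}(\R^d)$. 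The vanishing of $u$ on $D$, combined with this global $H^2$-regularity, forces $u|_{\partial D}=0$ and $\partial_\nu u|_{\partial D}=0$.

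Therefore $u$ satisfies the homogeneous Helmholtz equation in $\R^d\setminus\bar D$ with zero Cauchy data on $\partial D$ and the Sommerfeld radiation condition; Green's representation formula for radiating solutions in the exterior (equivalently, classical uniqueness for the exterior Helmholtz problem, e.g.\ Colton--Kress) then gives $u\equiv 0$ in $\R^d\setminus\bar D$. Together with $u\equiv 0$ in $D$ this yields $u\equiv 0$ in $\R^d$, and hence $f=(\Delta+1)u=0$, the desired injectivity. The only genuinely non-mechanical ingredient, and hence the main (though mild) obstacle, is this exterior-uniqueness step, which is classical and simply needs to be cited; everything else follows from the spectral theorem for compact operators.
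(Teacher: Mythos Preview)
Your proof is correct. For (i) both you and the paper rely on Riesz--Schauder; for (ii) the paper gives a one-line argument that is strictly shorter than yours: assuming $\K_D[u]=0$ on $D$, it simply applies $(\Delta+1)$ to obtain $0=(\Delta+1)\K_D[u]=nu$ on $D$, whence $u=0$. The point is that once the volume potential vanishes on the \emph{open} set $D$ and lies in $H^2(D)$, all its second derivatives vanish a.e.\ there, so the source is zero immediately---no exterior analysis is needed. Your detour through zero Cauchy data and exterior Helmholtz uniqueness is valid but superfluous: you ultimately recover $f=(\Delta+1)u=0$ from exactly the same local identity, after first establishing the unnecessary extra fact that $u\equiv 0$ in $\R^d\setminus\bar D$. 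The exterior-uniqueness machinery would be genuinely required if, say, the potential were only known to vanish on $\partial D$ or on a lower-dimensional set, but here the paper's direct argument is both simpler and self-contained (no Colton--Kress citation required).
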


\begin{proof}
We need only prove the second assertion. Assume that $\K_D [u] =\int_{D} G_0(x, y)n(y)u(y) \, dy =0$.
We have
$0= (\triangle+1 ) \K_D [u] = n u$, which shows that $u=0$. The assertion is then proved.   
\end{proof}

\begin{lem} \label{lem-eigen}
$\lambda \in \sigma(\K_D)$ if and only if there is a nontrivial solution in $H^2_{\mathrm{loc}}(\R^d)$ to the following problem
\begin{eqnarray}
 &(\Delta +1) u(x) = \f{1}{\lambda} n(x)u(x) \quad \mbox{in } \,D , \label{super-ocillatory-mode1}\\
  &(\Delta +1) u= 0 \quad   \mbox{in } \,\R^d\backslash D, \label{super-ocillatory-mode2}\\
  & u \m{ satisfies the Sommerfeld radiation condition}.
 \end{eqnarray}
\end{lem}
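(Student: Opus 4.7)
The plan is to establish the equivalence by showing that eigenfunctions of $\K_D$ extend canonically to solutions of the radiation problem, and that solutions of the radiation problem restrict to eigenfunctions. Throughout we tacitly assume $\lambda\neq 0$, since $1/\lambda$ appears in \eqref{super-ocillatory-mode1}; the case $\lambda=0$ is excluded for the obvious reason.

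For the forward direction, I would start from Lemma \ref{lem-11}(ii), which says every nonzero spectral value of $\K_D$ is an eigenvalue. So pick $v\in L^2(D)$ with $v\neq 0$ and $\K_D[v]=\lambda v$. Define the extension
\[
u(x) := \frac{1}{\lambda}\int_D G_0(x,y)\,n(y)\,v(y)\,dy,\qquad x\in\R^d.
\]
Standard mapping properties of the Helmholtz volume potential with $L^2$ density (the density is $nv\chi_D\in L^2$) give $u\in H^2_{\mathrm{loc}}(\R^d)$, and since $G_0$ is the outgoing fundamental solution, $u$ automatically satisfies the Sommerfeld radiation condition. Applying $(\Delta+1)$ to the potential yields $(\Delta+1)u=\tfrac{1}{\lambda}n\,v\,\chi_D$ in $\R^d$ in the distributional sense. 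Restricting to $D$ the integral representation coincides with $\K_D[v]/\lambda = v$, so $u|_D = v$ and we obtain $(\Delta+1)u = \tfrac{1}{\lambda}n\,u$ in $D$ together with $(\Delta+1)u=0$ in $\R^d\setminus\overline{D}$, as required.

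For the converse, assume $u\in H^2_{\mathrm{loc}}(\R^d)$ is a nontrivial solution of \eqref{super-ocillatory-mode1}--\eqref{super-ocillatory-mode2} satisfying the Sommerfeld condition. Combining the two equations gives $(\Delta+1)u=\tfrac{1}{\lambda}n\,u\,\chi_D$ in all of $\R^d$. Since the right-hand side has compact support in $D$ and $u$ is outgoing, uniqueness of the outgoing solution to the Helmholtz equation yields the representation
\[
u(x) \;=\; \int_{\R^d} G_0(x,y)\,\frac{n(y)\,u(y)\,\chi_D(y)}{\lambda}\,dy \;=\; \frac{1}{\lambda}\,\K_D[u|_D](x),\qquad x\in\R^d.
\]
Restricting this identity to $x\in D$ gives $\K_D[u|_D]=\lambda\,u|_D$, so it remains to exclude the degenerate case $u|_D\equiv 0$. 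But in that case $u$ would solve $(\Delta+1)u=0$ in the whole space with the Sommerfeld condition; by Rellich's uniqueness theorem this forces $u\equiv 0$, contradicting nontriviality. Hence $u|_D$ is a genuine eigenfunction and $\lambda\in\sigma_p(\K_D)\subset\sigma(\K_D)$.

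The main obstacle I anticipate is keeping the functional-analytic bookkeeping clean: verifying that the extension formula in the forward direction lands in $H^2_{\mathrm{loc}}$ and reproduces $v$ on $D$ (so that the equation in $D$ can be rewritten with $u$ in place of $v$), and justifying the integral representation in the converse direction from the Sommerfeld condition. Both rely on standard properties of the outgoing Helmholtz volume potential and Rellich's uniqueness theorem; once these are invoked, the algebraic identification of the eigenvalue equation with the PDE is immediate.
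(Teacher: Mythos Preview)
Your forward direction is exactly the paper's argument: extend the $L^2(D)$-eigenfunction to all of $\R^d$ via the outgoing Helmholtz volume potential with density $n\,v\,\chi_D$ and read off the equations and radiation condition. The paper's proof is a one-line sketch treating only this direction; your converse via the outgoing representation formula together with Rellich's uniqueness theorem is additional (and correct) content that the paper simply omits.
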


\begin{proof}
 Assume that $\K_D[u] = \lam u$. We define $\tilde{u}(x)= \int_{D} G(x, y)n(y)u(y) \, dy$, where $x\in \R^d$. Then $\tilde{u}$  satisfies the required equations.
\end{proof}

We call functions satisfying (\ref{super-ocillatory-mode1})- (\ref{super-ocillatory-mode2}) the resonant modes. They have sub-wavelength structures in $D$ for $|\lam| < 1$ and can propagate into the far-field. It is these sub-wavelength propagating modes that causes super-resolution. We may also call them super-oscillatory modes.

\begin{rmk}
It is clear that $\lambda$ is a non-zero real eigenvalue for the operator $\K_D$ if and only if $1$ is a transmission eigenvalue for the medium characterized by $1-\f{1}{\lambda} n(x)$, i.e., there exists a nontrivial solution to the following equations
\begin{eqnarray*}
& (\Delta +1-\f{1}{\lambda} n(x) ) u(x) = 0  \quad \mbox{in } \,D, \label{super-ocillatory-mode1b}\\
&  (\Delta +1) u= 0 \quad   \mbox{in } \,\R^d\backslash D, \label{super-ocillatory-mode2b}\\
& u \in H^2_{\mathrm{loc}}(\R^d)\\
  & u \m{ satisfies the Sommerfeld radiation condition}.
\end{eqnarray*}
\end{rmk}
We refer to \cite{sylvester} for a discussion on transmission eigenvalue problems.

We denote by $\mathcal{H}_{j}$ the generalized eigenspace of the operator $\K_D$ for the eigenvalue $\lam_j$.

\begin{lem} The following decomposition holds:
$$L^2(D) = \overline{ \bigcup_{j=1}^{\infty}\mathcal{H}_{j} }.$$ 
\end{lem}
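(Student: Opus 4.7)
The plan is to reduce $\K_D$ to a complex symmetric compact operator via a similarity transformation, decompose it into real and imaginary self-adjoint parts, and then invoke a Keldysh-type completeness theorem for non-self-adjoint compact operators.

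Since $n \in C^1(\bar D)$ is strictly positive, the multiplication operator $M := M_{\sqrt n}$ is bounded and boundedly invertible on $L^2(D)$. A short computation shows that the similar operator $\tilde K := M^{-1}\,\K_D\, M$ is the integral operator with kernel
$$
\tilde K(x,y) = -\sqrt{n(x)\,n(y)}\;G_0(x,y),
$$
which is symmetric in $(x,y)$. Similarity preserves eigenvalues and carries generalized eigenspaces bijectively, so it suffices to show that the root-vector system of $\tilde K$ is dense in $L^2(D)$.

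Now write $\tilde K = A + iB$, where $A$ and $B$ are the self-adjoint compact integral operators with real-valued kernels $-\sqrt{n(x)n(y)}\,\mathrm{Re}\,G_0(x,y)$ and $-\sqrt{n(x)n(y)}\,\mathrm{Im}\,G_0(x,y)$, respectively. The key regularity dichotomy is: $\mathrm{Im}\,G_0$ is real-analytic on all of $\R^d$ (it is a smooth plane-wave superposition over the unit sphere), so $B$ lies in every Schatten class; while $\mathrm{Re}\,G_0$ carries the local Newton-type singularity, so $A$ is an elliptic pseudodifferential operator of order $-2$ whose singular values decay with the Weyl rate. Injectivity of $A$ follows by adapting the argument in the proof of Lemma \ref{lem-11}: if $A[u]=0$, then $v(x) := \int_D \mathrm{Re}\,G_0(x,y)\,\sqrt{n(y)}\,u(y)\,dy$ vanishes on $D$, and applying $\Delta+1$ to the distributional equation $(\Delta+1)v = \chi_D\sqrt{n}\,u$ on $\R^d$ yields $\sqrt{n}\,u\equiv 0$.

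With these ingredients I would invoke the Keldysh completeness theorem (see, e.g., Gohberg--Krein, \emph{Introduction to the Theory of Linear Non-Selfadjoint Operators}, Ch.\ V): for a compact operator $A+iB$ with $A$ self-adjoint, injective, of finite Schatten order, and $B$ self-adjoint compact subordinate to $A$ in the Schatten sense, the generalized eigenvectors are complete. Applying this to $\tilde K$ and then pulling back through the bounded invertible similarity $M$ delivers the decomposition $L^2(D) = \overline{\bigcup_{j=1}^\infty \mathcal H_j}$ for $\K_D$.

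The main obstacle is the last step: translating the classical Keldysh statement into precisely the form needed and verifying the subordination hypothesis relating the Schatten indices of $A$ and $B$. The enormous smoothness gap between the entire kernel $\mathrm{Im}\,G_0$ and the Newton-type kernel $\mathrm{Re}\,G_0$ makes the subordination morally automatic, but the careful bookkeeping of Schatten orders in $d$ dimensions, together with the injectivity verification for $A$, is the delicate point.
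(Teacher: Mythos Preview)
Your route is genuinely different from the paper's and, in fact, more complete. The paper argues only that $\mathrm{Ker}\,\K_D^{*}=\{0\}$, hence $\overline{\K_D(L^2(D))}=(\mathrm{Ker}\,\K_D^{*})^{\perp}=L^2(D)$; but density of the range of a compact operator does \emph{not} by itself imply completeness of its root vectors --- the Volterra integration operator on $L^2(0,1)$ is Hilbert--Schmidt with trivial kernel and dense range, yet has empty point spectrum. A non-self-adjoint completeness theorem of Keldysh type is precisely what is needed here, and that is the tool you reach for.

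Your reduction is sound. The similarity by $M_{\sqrt n}$ yields the complex-symmetric kernel $-\sqrt{n(x)n(y)}\,G_0(x,y)$; the imaginary part $\mathrm{Im}\,G_0$ is real-analytic, so $B$ lies in every Schatten class (and is in fact positive semidefinite, being a positive multiple of the spherical-average kernel $\int_{|\xi|=1}e^{i\xi\cdot(x-y)}\,d\sigma(\xi)$ conjugated by $\sqrt n$); the real part carries the Newtonian singularity, so $A$ is an order $-2$ pseudodifferential operator with Weyl asymptotics $s_j(A)\asymp j^{-2/d}$; and your injectivity argument for $A$ is valid because $\mathrm{Re}\,G_0$ is itself a fundamental solution of $\Delta+1$ (real coefficients, real source). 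The remaining technical point you flag --- fitting $(A,B)$ into the hypotheses of Gohberg--Krein's Theorem~V.10.1 --- amounts to writing $\tilde K=A(I+S)$ with $S=iA^{-1}B$ compact; the extreme smoothing of $B$ (range contained in $C^\infty(\bar D)$) against the order-$2$ inverse $A^{-1}$ makes this plausible, though showing $\mathrm{Ran}\,B\subset\mathrm{Ran}\,A$ still requires a short argument. An alternative that sidesteps this bookkeeping is to exploit the sign of $B$: since $\tilde K$ has semidefinite imaginary part and lies in a Schatten ideal, one can instead invoke the Liv\v{s}ic--Brodski\u{\i} completeness theory for dissipative compact operators.
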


\begin{proof}
By the similar method as in the proof of Lemma \ref{lem-11}, we can show that $ \m{Ker } \K_D^* = \{0\}$.
As a result, we have
$$
 \overline{\K_D \big(L^2(D)\big)} =  \big(\m{Ker } \K_D^*\big)^{\perp} = L^2(D).
$$ 
The lemma is proved.  
\end{proof}

\begin{lem}
There exists a basis $\{u_{j, l, k}\}$, $1\leq l \leq m_j, 1\leq k \leq n_{j, l}$ for  $\mathcal{H}_{j}$ such that
\begin{equation*}
\K_D(u_{j, 1, 1}, ... , u_{j, m_j, n_{j, m_j}}) = (u_{j, 1, 1}, ... , u_{j, m_j, n_{j, m_j}}) \begin{pmatrix}
 J_{j,1} &  &  \\  & \ddots &  \\  &  & J_{j, m_j} 
 \end{pmatrix},
\end{equation*}
where $J_{j, l}$ is the canonical Jordan matrix of size $n_{j, l}$ in the form
\begin{equation*}
J_{j,l} = \begin{pmatrix}
 \lam_j & 1 &  &  \\ & \ddots & \ddots &   \\&  & \lambda_j & 1 \\ & & & \lam_j
 \end{pmatrix}.
\end{equation*}
\end{lem}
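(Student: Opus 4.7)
The plan is to apply the finite-dimensional Jordan canonical form theorem to the restriction of $\K_D$ to the generalized eigenspace $\mathcal{H}_j$, after first using the Riesz--Schauder theory for compact operators to reduce the problem to a finite-dimensional one.

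First, I would invoke compactness of $\K_D$ (established in the earlier lemma) together with standard Riesz--Schauder theory to conclude that, for each nonzero eigenvalue $\lam_j$, the ascending chain of kernels $\ker(\K_D - \lam_j I)^n$ stabilizes at some finite index $N_j$, and that the resulting subspace $\mathcal{H}_j = \ker(\K_D - \lam_j I)^{N_j}$ is finite-dimensional. This uses only that $\K_D$ is compact on the Hilbert space $L^2(D)$ and that $\lam_j \neq 0$; the fact that $0 \notin \sigma_p(\K_D)$ (from Lemma~\ref{lem-11}) guarantees we are away from the essential part of the spectrum.

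Second, I would verify that $\mathcal{H}_j$ is $\K_D$-invariant. This is immediate since $\K_D$ commutes with $(\K_D - \lam_j I)^{N_j}$: if $u \in \mathcal{H}_j$, then $(\K_D - \lam_j I)^{N_j} \K_D u = \K_D (\K_D - \lam_j I)^{N_j} u = 0$. Denote by $T_j := \K_D|_{\mathcal{H}_j}$ the restriction; by construction, $T_j - \lam_j I$ is nilpotent on $\mathcal{H}_j$ with nilpotency index at most $N_j$.

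Third, I would apply the classical Jordan canonical form theorem for a nilpotent endomorphism on a finite-dimensional vector space to $T_j - \lam_j I$. This yields a direct sum decomposition $\mathcal{H}_j = \bigoplus_{l=1}^{m_j} V_{j,l}$ into cyclic subspaces for $T_j - \lam_j I$, of respective dimensions $n_{j,l}$, with bases $\{u_{j,l,k}\}_{k=1}^{n_{j,l}}$ chosen so that $(T_j - \lam_j I) u_{j,l,k+1} = u_{j,l,k}$ and $(T_j - \lam_j I) u_{j,l,1} = 0$. Equivalently, $\K_D u_{j,l,k} = \lam_j u_{j,l,k} + u_{j,l,k-1}$ (with the convention $u_{j,l,0} = 0$), which is exactly the matrix form displayed in the statement.

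There is no substantial obstacle here: once compactness delivers a finite-dimensional generalized eigenspace, the result is a direct transcription of the standard Jordan decomposition. The only point requiring mild care is the stabilization of the ascending chain $\ker(\K_D - \lam_j I)^n$, but this is a routine consequence of the Riesz theory applied to the compact operator $\K_D$ at a nonzero spectral value.
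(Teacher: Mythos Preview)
Your proposal is correct and follows essentially the same approach as the paper, which simply invokes the Jordan canonical form for the restriction $\K_D|_{\mathcal{H}_j}$ on the finite-dimensional space $\mathcal{H}_j$. You have merely spelled out in more detail the Riesz--Schauder step guaranteeing finite-dimensionality and invariance of $\mathcal{H}_j$, which the paper leaves implicit.
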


\begin{proof}
This follows from the Jordan theory applied to the linear operator $\K_D|_{\mathcal{H}_{j}}: \mathcal{H}_{j} \rightarrow \mathcal{H}_{j}$ on the finite dimensional space $\mathcal{H}_{j}$.
\end{proof}

We denote
$\Gamma= \{(j, l, k) \in \textbf{N} \times \textbf{N} \times \textbf{N}; 1\leq l \leq m_j, 1\leq k \leq n_{j, l}\}$ the set of indices for the basis functions.
We introduce a partial order on $\textbf{N} \times \textbf{N} \times \textbf{N}$.
Let $\gamma=(j, k, l) \in \Gamma, \gamma'= (j', l', k') \in \Gamma$, we say that
$\gamma' \preceq \gamma$ if one of the following conditions are satisfied:
\begin{enumerate}
\item[(i)]
$j>j'$;
\item[(ii)]
$j=j'$, $l>l'$;
\item[(iii)]
  $j=j'$, $l=l'$, $k\geq k'$.
\end{enumerate}

%

By Gram-Schmidt orthonormalization process, the following result is obvious.
\begin{lem}
There exists orthonormal basis $\{e_{\gamma}: \gamma\in \Gamma\}$ for $\mathcal{H}$
such that
$$
e_{\gamma} = \sum_{\gamma' \preceq \gamma} a_{\gamma, \gamma'} u_{\gamma'},
$$
where $a_{\gamma, \gamma'}$ are constants and $a_{\gamma, \gamma} \neq 0$.
\end{lem}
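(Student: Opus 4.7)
The plan is to carry out the standard Gram-Schmidt orthonormalization of $\{u_\gamma\}_{\gamma \in \Gamma}$ using the ordering $\preceq$, and verify that the triangularity and non-degeneracy claims survive. The first step is to observe that $\preceq$ is in fact a total well-order on $\Gamma$ of order type $\omega$: for each fixed $j$ the indices $(l,k)$ range over a finite rectangle (bounded by $m_j$ and the $n_{j,l}$), so the whole countable set $\Gamma$ may be enumerated as $\gamma_1 \prec \gamma_2 \prec \gamma_3 \prec \cdots$. Along this enumeration I can define iteratively $v_{\gamma_1} := u_{\gamma_1}$ and, for $n\geq 2$,
\[
 v_{\gamma_n} := u_{\gamma_n} - \sum_{m<n} \langle u_{\gamma_n}, e_{\gamma_m}\rangle\, e_{\gamma_m}, \qquad e_{\gamma_n} := \frac{v_{\gamma_n}}{\|v_{\gamma_n}\|}.
\]

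The triangularity $e_\gamma = \sum_{\gamma'\preceq\gamma} a_{\gamma,\gamma'} u_{\gamma'}$ is then immediate by induction on $n$: each $e_{\gamma_m}$ with $m<n$ is, by the inductive hypothesis, a combination of $u_{\gamma_{m'}}$ with $m'\leq m<n$, so the same is true of $v_{\gamma_n}$, and the coefficient of $u_{\gamma_n}$ in $e_{\gamma_n}$ equals $1/\|v_{\gamma_n}\|$, which takes care of $a_{\gamma,\gamma}\neq 0$ provided the Gram-Schmidt step never collapses.

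The only step that needs genuine argument is therefore $v_{\gamma_n}\neq 0$, i.e.\ that $u_{\gamma_n}$ is not already in the linear span of $\{u_{\gamma_m} : m<n\}$. I will deduce this from the fact that $\{u_\gamma\}$ was constructed as a basis of the finite-dimensional generalized eigenspaces $\mathcal{H}_j$ that, for distinct eigenvalues $\lambda_j$, are linearly independent as subspaces of $L^2(D)$ (standard Jordan theory for compact operators); hence the whole family $\{u_\gamma\}_{\gamma\in\Gamma}$ is linearly independent and no Gram-Schmidt step collapses. I view this as the only mildly nontrivial point in the argument.

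Finally, completeness of $\{e_\gamma\}$ in $\mathcal{H} = \overline{\bigcup_j \mathcal{H}_j} = L^2(D)$ follows because, by the triangularity just established, $\mathrm{span}\{e_\gamma : \gamma\in\Gamma\} = \mathrm{span}\{u_\gamma : \gamma\in\Gamma\} = \bigcup_j \mathcal{H}_j$, whose closure is $L^2(D)$ by the preceding lemma. Orthonormality is built into Gram-Schmidt, so the construction delivers the required basis.
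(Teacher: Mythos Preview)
Your proof is correct and follows exactly the approach the paper indicates: the paper's entire argument is the single sentence ``By Gram--Schmidt orthonormalization process, the following result is obvious,'' and you have simply filled in the details of that process (enumerating $\Gamma$ via the total order $\preceq$, checking that linear independence of the Jordan bases prevents collapse, and deducing completeness from the preceding lemma). There is no difference in method to report.
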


We can regard $A= \{a_{\gamma, \gamma'}\}_{\gamma, \gamma' \in \Gamma}$ as a matrix. It is clear that $A$ is upper-triangular and has non-zero diagonal elements. Its inverse is denoted by $B= \{b_{\gamma, \gamma'}\}_{\gamma, \gamma' \in \Gamma}$ which is also upper-triangular and has non-zero diagonal elements. We have
$$
u_{\gamma}= \sum_{\gamma' \preceq \gamma} b_{\gamma, \gamma'} e_{\gamma'}.
$$

\begin{lem}
The functions $\{e_{\gamma}(x)\overline{e_{\gamma'}(y)}\}$ form a normal basis for the Hilbert space $L^2(D\times D)$.
Moreover, the following completeness relation holds: 
$$
\delta(x-y) = \sum_{\gamma} e_{\gamma}(x)\overline{e_{\gamma}(y)}.
$$
\end{lem}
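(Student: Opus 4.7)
The plan is to establish the two assertions by appealing to the Hilbert space tensor product construction and then interpreting the completeness relation distributionally.

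First, I would upgrade the statement of the previous lemma: although $\{e_\gamma : \gamma\in\Gamma\}$ was built via Gram--Schmidt inside $\mathcal{H} = \bigcup_j \mathcal{H}_j$, the earlier density result gives $L^2(D)=\overline{\bigcup_j \mathcal{H}_j}$, so $\{e_\gamma\}$ is in fact an orthonormal basis of the full space $L^2(D)$. This is the starting point for everything else.

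Next, for the first assertion, I would use the canonical identification $L^2(D)\otimes L^2(D)\cong L^2(D\times D)$ under $(f\otimes \overline{g})(x,y)=f(x)\overline{g(y)}$. Orthonormality of $\{e_\gamma(x)\overline{e_{\gamma'}(y)}\}$ is an immediate Fubini computation,
\[
\int_{D\times D} e_\gamma(x)\overline{e_{\gamma'}(y)}\,\overline{e_\alpha(x)}\,e_\beta(y)\,dx\,dy = \delta_{\gamma\alpha}\,\delta_{\gamma'\beta}.
\]
For completeness in $L^2(D\times D)$, given $f\in L^2(D\times D)$, I would first expand $f(\cdot,y)\in L^2(D)$ for a.e.\ $y$ as $f(x,y)=\sum_\gamma c_\gamma(y)e_\gamma(x)$ with $c_\gamma(y)=\int_D \overline{e_\gamma(x)}f(x,y)\,dx\in L^2(D)$ (by Fubini), and then expand each $c_\gamma$ in $\{\overline{e_{\gamma'}}\}$, obtaining the full double expansion. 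A Parseval/Bessel argument in both variables yields $\|f\|_{L^2(D\times D)}^2=\sum_{\gamma,\gamma'}|\langle f, e_\gamma\overline{e_{\gamma'}}\rangle|^2$, which is the required completeness.

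For the completeness relation $\delta(x-y)=\sum_\gamma e_\gamma(x)\overline{e_\gamma(y)}$, I would interpret the sum distributionally on $D\times D$. For any test function $\varphi\in L^2(D)$, Parseval gives
\[
\varphi(x)=\sum_\gamma \langle \varphi, e_\gamma\rangle\, e_\gamma(x)=\int_D \Big(\sum_\gamma e_\gamma(x)\overline{e_\gamma(y)}\Big)\varphi(y)\,dy,
\]
so the kernel $\sum_\gamma e_\gamma(x)\overline{e_\gamma(y)}$ acts as the identity operator on $L^2(D)$, whose Schwartz kernel is precisely $\delta(x-y)$. The main subtle point—and the only nontrivial aspect of the argument—is that the series $\sum_\gamma e_\gamma(x)\overline{e_\gamma(y)}$ does not converge in $L^2(D\times D)$ or pointwise, but only in the sense of distributions (equivalently, as the kernel of the identity operator in the weak operator topology on $L^2(D)$); the equation must therefore be read in $\mathcal{D}'(D\times D)$, which is the standard convention for such spectral completeness identities.
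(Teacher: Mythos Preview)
The paper does not actually supply a proof of this lemma; it is stated as a standard fact and used immediately afterward. Your argument fills in exactly the expected details: you correctly invoke the earlier density result $L^2(D)=\overline{\bigcup_j\mathcal H_j}$ to upgrade $\{e_\gamma\}$ to an orthonormal basis of all of $L^2(D)$, then use the tensor identification $L^2(D\times D)\cong L^2(D)\otimes L^2(D)$ for the first assertion, and the distributional (Schwartz-kernel) reading of the completeness relation for the second. This is the standard route, and your explicit caveat that the series $\sum_\gamma e_\gamma(x)\overline{e_\gamma(y)}$ converges only in $\mathcal D'(D\times D)$, not in $L^2(D\times D)$, is correct and worth retaining.
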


By standard elliptic theory, we have
$G(x, x_0) \in L^2(D\times D)$ for fixed $k$. Thus we have
\be
G(x, x_0)= \alpha_{\gamma, \gamma'} e_{\gamma}(x) \overline{e_{\gamma}}(x_0),
\ee
for some constants
$\alpha_{\gamma, \gamma'}$ satisfying
$$
\sum_{\gamma, \gamma' } |\alpha_{\gamma, \gamma'}|^2 = \|G(x, x_0)\|^2_{L^2(D\times D)} < \infty.
$$

To analyze the Green function $G$, we need to find the constants $\alpha_{\gamma, \gamma'}$. For doing so, we first note that
$$
G_0(x, x_0) = \f{1}{n(x_0)} \K_D [\delta( \cdot-x_0)].
$$
Thus,
\beas
G(x, x_0) &= &  G_0(x, x_0) + (\f{1}{\tau} - \K_{D})^{-1} \K_D ^2 [\delta(\cdot - x_0)] \\
&= &  G_0(x, x_0) + \f{1}{n(x_0)} \sum_{\gamma} \overline{e_{\gamma}}(x_0) (\f{1}{\tau} - \K_{D})^{-1} \K_D ^2 [e_{\gamma}].
\eeas

We next compute $(\f{1}{\tau} - \K_{D})^{-1} \K_D ^2 [e_{\gamma}]$.
For ease of notation, we define $u_{j, l, k} =0$ for $k\leq 0$.
We have
\beas
\K_D [u_{j, l, k}] &=& \lam_j u_{j, l, k} + u_{j, l, k-1} \quad \m{for all} \,\, j, l, k,\\
\K_D^2 [u_{j, l, k}] &=& \lam_j^2 u_{j, l, k} + 2\lam_j u_{j, l, k-1}+u_{j, l, k-2}  \quad \m{for all} \,\, j, l, k.
\eeas

On the other hand, for $z \notin \sigma(\K_D)$, we have
\beas
(z- \K_D) ^{-1} [u_{j, l, k}] = \f{1}{z-\lam_j} u_{j, l, k} + \f{1}{(z-\lam_j)^2} u_{j, l, k-1}
+... + \f{1}{(z-\lam_j)^{k}}u_{j, l, 1},
\eeas
and therefore, it follows that
\beas
(z- \K_D) ^{-1} \K_D^2  [u_{j, l, k}]
&=&  \f{\lam_j^2}{z-\lam_j} u_{j, l, k}+ \f{\lam_j^2}{(z-\lam_j)^2} u_{j, l, k-1} \dotsb +
  \f{ \lam_j^2}{(z-\lam_j)^{k}} u_{j, l, 1} \\
&& + \f{2 \lam_j}{z-\lam_j} u_{j, l, k-1}+ \f{2\lam_j}{(z-\lam_j)^2} u_{j, l, k-2} \dotsb +
  \f{ 2 \lam_j}{(z-\lam_j)^{k-1}} u_{j, l, 1} \\
&& + \f{1}{z-\lam_j} u_{j, l, k-2}+ \f{1}{(z-\lam_j)^2} u_{j, l, k-3} \dotsb +
  \f{1}{(z-\lam_j)^{k-2}} u_{j, l, 1}\\
&=&    \f{\lam_j^2}{z-\lam_j} u_{j, l, k} + \Big(\f{\lam_j^2}{(z-\lam_j)^2}+ \f{2 \lam_j}{z-\lam_j}\Big) u_{j, l, k-1}\\
&& + \Big(\f{\lam_j^2}{(z-\lam_j)^3}+ \f{2 \lam_j}{z-\lam_j}+ \f{1}{z-\lam_j} \Big) u_{j, l, k-2}\\
&& + ... +  \Big(\f{\lam_j^2}{(z-\lam_j)^k}+ \f{2 \lam_j}{(z-\lam_j)^{k-1}}+ \f{1}{(z-\lam_j)^{k-2}} \Big) u_{j, l, 1} \\
&=& \sum_{\gamma'} d_{\gamma, \gamma'} e_{\gamma'},
\eeas
where we have introduced the matrix $D= \{d_{\gamma, \gamma'}\}_{\gamma, \gamma' \in \Gamma}$, which is upper-triangular and has block-structure.

%
%

With these calculations, by taking $z=1/\tau$, we arrive at the following result. 

\begin{thm}
The following expansion holds for the Green function
\be
G(x, x_0) = G_0(x, x_0) + \sum_{\gamma \in \Gamma} \sum_{\gamma' \preceq \gamma} \alpha_{\gamma, \gamma'} e_{\gamma}(x) \overline{e_{\gamma'}}(x_0),
\ee
where
$$
\alpha_{\gamma, \gamma^{'''}} = \f{1}{n(x_0)}\sum_{\gamma' \preceq \gamma} \sum_{\gamma^{''} \preceq \gamma}
\sum_{\gamma^{'''} \preceq \gamma^{''}}
 a_{\gamma, \gamma'}d_{\gamma', \gamma^{''}}b_{\gamma^{''}, \gamma^{'''}}.
$$

Moreover, for $\tau$ belonging to a compact subset of $\R \setminus \big(\R \cap \sigma(\K_D) \big)$, we have the following uniform bound
$$
\sum_{\gamma, \gamma' } |\alpha_{\gamma, \gamma'}|^2 < \infty.
$$
\end{thm}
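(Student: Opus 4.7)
The plan is to start from the representation $v := G - G_0 = (\tfrac{1}{\tau} - \K_D)^{-1}\K_D[G_0(\cdot,x_0)]$ obtained above, substitute the identity $G_0(\cdot,x_0) = \tfrac{1}{n(x_0)}\K_D[\delta(\cdot - x_0)]$, and insert the completeness relation $\delta(y-x_0) = \sum_{\gamma} e_{\gamma}(y)\overline{e_{\gamma}(x_0)}$. This yields $v(x,x_0) = \tfrac{1}{n(x_0)} \sum_{\gamma} \overline{e_{\gamma}(x_0)}\,(\tfrac{1}{\tau} - \K_D)^{-1}\K_D^2[e_{\gamma}](x)$, so the whole problem is reduced to expanding $(\tfrac{1}{\tau} - \K_D)^{-1}\K_D^2[e_{\gamma}]$ in the orthonormal basis $\{e_{\gamma'''}\}$.

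I would carry out this expansion in three successive changes of basis, mirroring the three sums in the theorem. First, use $e_{\gamma} = \sum_{\gamma' \preceq \gamma} a_{\gamma,\gamma'} u_{\gamma'}$ to move to the Jordan basis. Second, apply $(\tfrac{1}{\tau}-\K_D)^{-1}\K_D^2$ to each generalized eigenvector using the explicit closed-form formula with $z = 1/\tau$ already displayed just above the theorem: on each Jordan cell $(j,l)$ this operation is an upper-triangular linear map that stays inside the cell, and I would package its entries as a block matrix $D = \{d_{\gamma',\gamma''}\}$ supported on $\gamma'' \preceq \gamma'$ within the same block. Third, revert to the orthonormal basis via $u_{\gamma''} = \sum_{\gamma''' \preceq \gamma''} b_{\gamma'',\gamma'''} e_{\gamma'''}$. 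Composing the three upper-triangular substitutions and multiplying by $\tfrac{1}{n(x_0)}\overline{e_{\gamma}(x_0)}$ produces exactly the product $a_{\gamma,\gamma'} d_{\gamma',\gamma''} b_{\gamma'',\gamma'''}$ and the nested partial-order constraints of the announced formula for $\alpha_{\gamma,\gamma'''}$.

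For the uniform square-summability bound, I would invoke Parseval in $L^2(D \times D)$: since $\{e_{\gamma}(x)\overline{e_{\gamma'}(y)}\}$ is a complete orthonormal system there, $\sum_{\gamma,\gamma'} |\alpha_{\gamma,\gamma'}|^2$ is controlled, up to the bounded factor $1/n$, by $\|v\|^2_{L^2(D \times D)}$. To estimate the latter I would integrate the operator bound $\|v(\cdot,x_0)\|_{L^2(D)} \leq \|(\tfrac{1}{\tau}-\K_D)^{-1}\|\,\|\K_D\|\,\|G_0(\cdot,x_0)\|_{L^2(D)}$ over $x_0 \in D$, using the Hilbert-Schmidt character of $\K_D$ from the earlier lemma (which in particular gives $G_0 \in L^2(D \times D)$). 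Uniformity in $\tau$ then follows because $\zeta \mapsto (\zeta - \K_D)^{-1}$ is analytic, hence locally bounded, on the resolvent set of $\K_D$, which by hypothesis contains the compact set of admissible values of $1/\tau$. The main obstacle I anticipate is not analytical but combinatorial: verifying that the resolvent composed with $\K_D^2$ genuinely acts as a block-upper-triangular map in the Jordan basis, and checking that composing the three triangular substitutions produces exactly the nested index ranges asserted in the theorem; the remaining ingredients, Parseval and resolvent boundedness, are routine once the spectral decomposition from the preceding lemmas is in hand.
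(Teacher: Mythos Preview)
Your proposal is correct and follows essentially the same route as the paper: the paper carries out exactly the computation you describe (express $e_\gamma$ in the Jordan basis via $A$, apply the explicit formula for $(z-\K_D)^{-1}\K_D^2$ on Jordan chains to get the block-triangular matrix $D$, then return to the orthonormal basis via $B$), stating the theorem as the outcome of these calculations with $z=1/\tau$. Your Parseval/resolvent-bound argument for the uniform square-summability is in fact more explicit than what the paper provides, since the paper simply asserts that bound without further justification.
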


Alternatively, if we start from the identity
\beas
\delta(x-y) &= & \sum_{\gamma} e_{\gamma}(x)\overline{e_{\gamma}(y)} \\
&= & \sum_{\gamma} \sum_{\gamma'} \sum_{\gamma''}
a_{\gamma, \gamma'} u_{\gamma'}(x) \overline{a_{\gamma, \gamma''}} \overline{u_{\gamma''}}(x_0) \\
&= & \sum_{\gamma} \sum_{\gamma'} \sum_{\gamma''}
a_{\gamma, \gamma'} \overline{a_{\gamma, \gamma''}} u_{\gamma'}(x) \overline{u_{\gamma''}}(x_0), 
\eeas
then we can obtain an equivalent expansion for the Green function in terms of the basis of resonant modes.

\begin{thm}
The following expansion holds for the Green function
\be \label{exps1}
G(x, x_0) = G_0(x, x_0) + \sum_{\gamma \in \Gamma} \sum_{\gamma^{'''} \preceq \gamma} \beta_{\gamma, \gamma^{'''}} u_{\gamma}(x) \overline{u_{\gamma^{'''}}}(x_0),
\ee
where
$$
\beta_{\gamma, \gamma^{'''}} =\f{1}{n(x_0)}\sum_{\gamma^{'} \preceq \gamma} \sum_{\gamma^{''} \preceq \gamma}
\sum_{\gamma^{'''} \preceq \gamma^{'}}
 a_{\gamma, \gamma'}\overline{a_{\gamma, \gamma^{''}}} d_{\gamma^{'}, \gamma^{'''}}.
$$
Here, the infinite summation can be interpreted as follows
\be \label{exps2}
\lim_{\gamma_0 \rightarrow \infty} \sum_{\gamma \leq \gamma_0} \sum_{\gamma' \preceq \gamma} \beta_{\gamma, \gamma'} u_{\gamma}(x) \overline{u_{\gamma'}}(x_0) = G(x, x_0) -  G_0(x, x_0) \quad \mbox{in }\,\, L^2(D\times D).
\ee
\end{thm}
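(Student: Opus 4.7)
The plan is to repeat, almost verbatim, the derivation that led to the first expansion theorem, but to carry the entire computation through in the Jordan basis $\{u_\gamma\}$ instead of the orthonormal basis $\{e_\gamma\}$, using the expansion of the Dirac mass in the $u$-basis given just before the statement. The starting point is the resolvent identity
$$
G(x,x_0) \;=\; G_0(x, x_0) \;+\; \f{1}{n(x_0)}\Big(\f{1}{\tau} - \K_D\Big)^{-1} \K_D^{\,2}\, [\delta(\cdot - x_0)](x),
$$
already used to prove the previous theorem, so that the task reduces to applying $(\f{1}{\tau} - \K_D)^{-1} \K_D^{\,2}$ term by term to the $u_\gamma$-expansion of $\delta(\cdot - x_0)$.

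First I would substitute
$$
\delta(x-x_0) \;=\; \sum_\gamma \sum_{\gamma' \preceq \gamma} \sum_{\gamma'' \preceq \gamma} a_{\gamma, \gamma'}\,\overline{a_{\gamma, \gamma''}}\, u_{\gamma'}(x)\,\overline{u_{\gamma''}(x_0)},
$$
and then apply $(\f{1}{\tau} - \K_D)^{-1} \K_D^{\,2}$ in the $x$-variable using the closed-form action on Jordan blocks already computed in the proof of the previous theorem,
$$
\Big(\f{1}{\tau} - \K_D\Big)^{-1} \K_D^{\,2}\, [u_{\gamma'}](x) \;=\; \sum_{\gamma''' \preceq \gamma'} d_{\gamma', \gamma'''}\, u_{\gamma'''}(x).
$$
This turns the right-hand side into a quadruple sum in $\gamma, \gamma', \gamma'', \gamma'''$. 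Collecting the coefficient of each bilinear $u_{\tilde\gamma}(x)\, \overline{u_{\tilde\gamma'}(x_0)}$ and relabeling $(\tilde\gamma, \tilde\gamma') \mapsto (\gamma, \gamma''')$ yields exactly the formula for $\beta_{\gamma, \gamma'''}$ stated in the theorem; this is purely combinatorial bookkeeping that rests on the upper-triangular structure of $A$ and $D$ and on the nested nature of the partial order $\preceq$.

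The main obstacle is the meaning of the infinite sum in (\ref{exps2}). Because the family $\{u_\gamma\}$ is not orthonormal, the Bessel-type bound that gave the uniform $\ell^{2}$-estimate on $\alpha_{\gamma, \gamma'}$ in the previous theorem does not transfer: one cannot merely control $\sum |\beta_{\gamma, \gamma'''}|^{2}$. To get around this I would interpret the truncation $\gamma \preceq \gamma_0$ as a block truncation with respect to the decomposition $L^2(D) = \overline{\bigoplus_j \mathcal{H}_j}$, exploiting that $A$ and its inverse $B$ are upper-triangular with nonzero diagonal and act within each finite-dimensional generalized eigenspace $\mathcal{H}_j$. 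When $\gamma_0$ is chosen so that the truncation corresponds to a complete block $\mathcal{H}_1 \oplus \cdots \oplus \mathcal{H}_N$, the partial sum in the $u$-basis coincides, as an element of $L^{2}(D\times D)$, with the corresponding partial sum in the $e$-basis already shown to converge to $G - G_0$ in the previous theorem. Passing to the limit then delivers (\ref{exps2}). The delicate point, and the only one that is not routine algebra, is verifying this term-by-term equality of block-truncated partial sums; once that is done the convergence is inherited from the orthonormal expansion.
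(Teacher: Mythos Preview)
Your proposal is correct and follows essentially the same route as the paper: the paper's entire argument is the paragraph preceding the theorem, which rewrites the $\delta$-identity in the $u$-basis via the $a$-coefficients and then invokes the already-computed action of $(\tfrac{1}{\tau}-\K_D)^{-1}\K_D^2$ on the Jordan chains; you reproduce this and supply a convergence argument the paper leaves implicit. One small slip: $A$ and $B$ do \emph{not} act within each $\mathcal{H}_j$ separately (the generalized eigenspaces need not be mutually orthogonal, so Gram--Schmidt mixes them), but upper-triangularity with nonzero diagonal already gives $\mathrm{span}\{u_\gamma:\gamma\preceq\gamma_0\}=\mathrm{span}\{e_\gamma:\gamma\preceq\gamma_0\}$ at any block boundary $\gamma_0$, which is all your block-truncation argument actually uses.
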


In order to have some idea of the expansions of the Green function $G(x, y)$, we compare them to the expansion of the Green function in the homogeneous space, i.e., $G_0(x, y)$. For this purpose, we introduce the matrix $H=\{h_{\gamma, \gamma'}\}_{\gamma, \gamma' \in \Gamma}$, which is defined by
$$
\K_{D} [u_{\gamma}] = \sum_{\gamma'} h_{\gamma, \gamma'} u_{\gamma'}.
$$

In fact, we have 
$$
h_{j,l,k,j',l',k'} = \lambda_j \delta_{j,j'} \delta_{l,l'} \delta_{k,k'} + \delta_{j,j'} \delta_{l,l'} \delta_{k-1, k'}, 
$$
where $\delta$ denotes the Kronecker symbol. 

\begin{lem}
\begin{enumerate}
\item[(i)]
In the normal basis $\{e_{\gamma}\}_{\gamma\in \Gamma}$, the following expansion holds for the Green function $G_0(x, x_0)$
\be
G_0(x, x_0) = \sum_{\gamma \in \Gamma} \sum_{\gamma' \preceq \gamma} \tilde{\alpha}_{\gamma, \gamma'} e_{\gamma}(x) \overline{e_{\gamma'}}(x_0),
\ee
where
$$
\tilde{\alpha}_{\gamma, \gamma^{'''}} = \f{1}{n(x_0)}\sum_{\gamma' \preceq \gamma} \sum_{\gamma^{''} \preceq \gamma}
\sum_{\gamma^{'''} \preceq \gamma^{''}}
 a_{\gamma, \gamma'}h_{\gamma', \gamma^{''}}b_{\gamma^{''}, \gamma^{'''}}.
$$

Moreover, for $k$ belonging to a compact subset of $\R \setminus \big(\R \cap \sigma(\K_D) \big)$, we have the following uniform bound
$$
\sum_{\gamma, \gamma' } |\tilde{\alpha}_{\gamma, \gamma'}|^2 < C < \infty.
$$

\item[(ii)]
In the basis of resonant modes $\{u_{\gamma}\}_{\gamma\in \Gamma}$, the following expansion holds for the Green function $G_0(x, x_0)$
\be
G_0(x, x_0) = \sum_{\gamma \in \Gamma} \sum_{\gamma^{'''} \preceq \gamma} \tilde{\beta}_{\gamma, \gamma^{'''}} u_{\gamma}(x) \overline{u_{\gamma^{'''}}}(x_0),
\ee
where
$$
\tilde{\beta}_{\gamma, \gamma^{'''}} =\f{1}{n(x_0)}\sum_{\gamma^{'} \preceq \gamma} \sum_{\gamma^{''} \preceq \gamma}
\sum_{\gamma^{'''} \preceq \gamma^{'}}
 a_{\gamma, \gamma'}\overline{a_{\gamma, \gamma^{''}}} h_{\gamma^{'}, \gamma^{'''}}.
$$
Here, the infinite summation can be interpreted as follows
$$
\lim_{\gamma_0 \rightarrow \infty} \sum_{\gamma \leq \gamma_0} \sum_{\gamma' \preceq \gamma} \tilde{\beta}_{\gamma, \gamma'} u_{\gamma}(x) \overline{u_{\gamma'}}(x_0) = G_0(x, x_0) \quad \mbox{in }\,\, L^2(D\times D).
$$

\end{enumerate}
\end{lem}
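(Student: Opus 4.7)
The plan is to substitute the two forms of the completeness expansion of $\delta(x-x_0)$ into the identity
$$
G_0(x, x_0) = \frac{1}{n(x_0)}\,\K_D[\delta(\cdot - x_0)](x),
$$
which has already been established just before the statement, and then compute $\K_D$ of each basis function through the matrices $A$, $B$, $H$.

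For assertion (i), I would start from the orthonormal completeness $\delta(x - x_0) = \sum_\gamma e_\gamma(x)\overline{e_\gamma(x_0)}$, apply $\K_D$ termwise in $x$ (justified by the boundedness, in fact the Hilbert-Schmidt property, of $\K_D$ on $L^2(D)$), and obtain
$$
G_0(x,x_0) = \frac{1}{n(x_0)}\sum_\gamma \overline{e_\gamma(x_0)}\,\K_D[e_\gamma](x).
$$
I would then evaluate $\K_D[e_\gamma]$ by running through the chain
$$
e_\gamma = \sum_{\gamma' \preceq \gamma} a_{\gamma,\gamma'} u_{\gamma'},\qquad \K_D[u_{\gamma'}] = \sum_{\gamma'' \preceq \gamma'} h_{\gamma',\gamma''} u_{\gamma''},\qquad u_{\gamma''} = \sum_{\gamma''' \preceq \gamma''} b_{\gamma'',\gamma'''} e_{\gamma'''},
$$
where the restriction $\gamma'' \preceq \gamma'$ in the middle step is immediate from the explicit formula $h_{(j,l,k),(j',l',k')} = \lambda_j \delta_{jj'}\delta_{ll'}\delta_{kk'} + \delta_{jj'}\delta_{ll'}\delta_{k-1,k'}$ stated in the text. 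Collecting coefficients of $e_{\gamma'''}(x)\overline{e_{\gamma}(x_0)}$ reproduces the stated expression for $\tilde{\alpha}$ after relabeling. The uniform bound on $\sum_{\gamma,\gamma'}|\tilde{\alpha}_{\gamma,\gamma'}|^2$ is then immediate from Parseval's identity applied to the orthonormal family $\{e_\gamma(x)\overline{e_{\gamma'}(x_0)}\}$ in $L^2(D\times D)$, combined with the continuous dependence of $G_0$ in this norm on the parameter in question on its admissible range.

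For assertion (ii) the derivation is entirely parallel, but uses instead the alternative form of the completeness relation in the non-orthogonal resonant-mode basis, displayed immediately before the statement. Applying $\frac{1}{n(x_0)}\K_D$ in $x$ and invoking $\K_D[u_{\gamma'}] = \sum_{\gamma'''\preceq \gamma'} h_{\gamma',\gamma'''} u_{\gamma'''}$ yields the formula for $\tilde{\beta}$ directly. The interpretation of the infinite double series as the $L^2(D\times D)$-limit of its partial sums $\sum_{\gamma \leq \gamma_0}$ I would obtain by transferring the already established convergence of the $e$-basis expansion through the triangular change of basis $B$: truncating at level $\gamma_0$ in the $e$-basis corresponds to truncating at the same level in the $u$-basis, so the two partial sums differ only by a finite linear combination of terms within level $\gamma_0$ and therefore converge together.

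The main obstacle is not analytical but combinatorial: the careful bookkeeping of the triangular structure of $A$, $B$, and $H$ across the triple composition, so as to verify that the a priori quadruple sums indeed collapse to the constrained sums written in the statement. For part (ii) the additional subtlety is the rigorous formulation of the sense in which the non-orthogonal double series converges, which I would handle by comparing the two expansions at each finite truncation level and bootstrapping on part (i), where orthogonality makes the convergence automatic.
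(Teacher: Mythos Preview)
The paper gives no separate proof for this lemma; it is stated immediately after the two theorems for $G(x,x_0)$ and the definition of the matrix $H$, with the understanding that the same derivation goes through verbatim once the operator $(\tfrac{1}{\tau}-\K_D)^{-1}\K_D^2$ (encoded by the matrix $D$) is replaced by $\K_D$ itself (encoded by $H$). Your plan reproduces exactly this implicit argument: start from $G_0(x,x_0)=\frac{1}{n(x_0)}\K_D[\delta(\cdot-x_0)]$, expand $\delta$ in either the orthonormal basis $\{e_\gamma\}$ or via the rewritten completeness relation in the $\{u_\gamma\}$ basis, push $\K_D$ through using the triangular matrices $A$, $H$, $B$, and collect coefficients. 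So your approach is essentially the paper's own, made explicit.

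Two small remarks. First, for the uniform bound in (i), note that $G_0$ does not depend on $\tau$ (or on the parameter the paper somewhat confusingly calls $k$ here, since $k$ was fixed to $1$), so the bound is simply $\|G_0\|_{L^2(D\times D)}^2$ by Parseval; no continuous-dependence argument is needed. Second, your justification of the convergence in (ii) via the triangular change of basis is a clean way to make precise what the paper leaves unsaid: because $A$ and $B$ are upper-triangular with respect to the total order $\preceq$, the spans of $\{e_{\gamma'}:\gamma'\preceq\gamma_0\}$ and $\{u_{\gamma'}:\gamma'\preceq\gamma_0\}$ coincide, so the partial sums agree and the $L^2(D\times D)$ convergence transfers directly from the orthonormal expansion.
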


By comparing the coefficients $\alpha_{\gamma, \gamma'}$ (or $\beta_{\gamma, \gamma'}$) and $\tilde{\alpha}_{\gamma, \gamma'}$ (or $\tilde{\beta}_{\gamma, \gamma'})$, we can see that the imaginary part of $G(x, y)$ may have sharper peak than $G_0(x, y)$ due to the excited high frequency resonant modes.

Finally, note that in the special case where the spaces $\mathcal{H}_j$ are of dimension one, we have
$$d_{\gamma, \gamma'} = \delta_{\gamma, \gamma'} \frac{\lambda_j^2}{z-\lambda_j}, \quad 
h_{\gamma,\gamma'} = \delta_{\gamma,\gamma'} \lambda_j.$$

\section{Concluding remarks}

In this paper, we provided a mathematical theory  to explain 
the super-resolution and super-focusing mechanisms in high contrast media. 

From the  expansions (\ref{exps1}) and (\ref{exps2}), we proved that the super-resolution is due to propagating sub-wavelength resonant modes. It is worth mentioning that in (\ref{exps1}) and (\ref{exps2}), we observed that a phenomenon of mixing of modes occurs. This is essentially due to the non-hermitian nature of the system (the operator $\K_D$) we considered. We believe that the  mixing of resonant modes is an intrinsic nature of non-hermitian systems, as opposed the the eigen-expansion for hermitian systems, because of the rigorous mathematical convergence results. However, this phenomenon is sometimes ignored in physics literature where formal resonance expansions without mixing are proposed without any evidence of convergence.

Our approach in this paper for inverse source problems complements the one recently proposed in \cite{scatcoef}, which is based on the concept of scattering coefficients and solves the superresolution problem for inverse scattering problems. 

Finally, we expect that our present approach could provide a mathematical  explanation of the  mechanism of super-resolution and super-focusing in other resonant media including negative index materials.

\end{document}